\newtheorem{theorem}{Theorem}[section]
\newtheorem{corollary}[theorem]{Corollary}
\newtheorem{proposition}[theorem]{Proposition}
\theoremstyle{definition}
\newtheorem{definition}[theorem]{Definition}
\newtheorem{remark}[theorem]{Remark}
\renewcommand\th{^{\text{th}}}
\newcommand\Ta{\mathbb{T}_\alpha}
\newcommand\Tb{\mathbb{T}_\beta}
\newcommand\Xs{\mathbb{X}}
\newcommand\Os{\mathbb{O}}
\newcommand\Rect{\mathrm{Rect}}
\newcommand\Sym{\mathrm{Sym}}
\newcommand\zz {{\mathbb{Z}}}
\newcommand\rr {{\mathbb{R}}}
\newcommand\del {{\partial}}
\newcommand\sss {{\mathfrak{s}}}
\newcommand\C {{\mathcal{C}}}
\newcommand\spc {{\operatorname{Spin^c}}}
\newcommand\rk {{\operatorname{rank}}}
\newcommand\s{\mathbf s}
\newcommand\xs{\mathbf x}
\newcommand\ys{\mathbf y}
\newcommand\Chain {\mathfrak{A}}
\newcommand\alphas{\boldsymbol\alpha}
\newcommand\betas{\boldsymbol\beta}
\newcommand\EmptyRect{\Rect^\circ}
\newcommand\cx {\mathfrak{c}}
\newcommand\ff {{\mathbb{F}}}
\newcommand\Dom {{\mathcal{D}}}
\newcommand\Poly {{\mathcal{P}}}
\newcommand\Cone{{\operatorname{Cone}}}
\newcommand\CF {\mathit{CF}}
\newcommand\HF {\mathit{HF}}
\newcommand\Gens {\mathbf{S}}
\newcommand\HFm{\mathbf{HF}^-}
\newcommand\HFplus {\HF^+}
\newcommand \CFplus {\CF^+}
\newcommand \CFminus {\CF^-}
\newcommand \HFminus {\HF^-}
\newcommand \HFinf {\HF^{\infty}}
\newcommand\HFinfty {\HFinf}
\newcommand\HFred{{HF}_{\operatorname{red}}}
\newcommand\HFhat{\widehat{\HF}}
\newcommand\CFhat{\widehat{\CF}}
\newcommand\HFLminus{{HFL}^-}
\newcommand\HFLhat{\widehat{{HFL}}}
\newcommand\CFKhat{\widehat{{CFK}}}
\newcommand\CFKminus{{CFK}^-}
\newcommand\HFKhat{\widehat{{HFK}}}
\newcommand\HFKminus{{HFK}^-}
\title[Grid Diagrams in Heegaard Floer Theory]{Grid Diagrams in Heegaard Floer Theory}
\author[Ciprian Manolescu]{Ciprian Manolescu}
\thanks{The author was partially supported by NSF grant number DMS-1104406.}
\address {Department of Mathematics, UCLA, 520 Portola Plaza\\ 
Los Angeles, CA 90095, USA}
\email {cm@math.ucla.edu}
\begin{document}

\begin{abstract}
We review the use of grid diagrams in the development of Heegaard Floer theory. We describe the construction of the combinatorial link Floer complex, and the resulting algorithm for unknot detection. We also explain how grid diagrams can be used to show that the Heegaard Floer invariants of $3$-manifolds and $4$-manifolds are algorithmically computable (mod $2$).
\end{abstract}

\maketitle

\section{Introduction}

Invariants coming from gauge theory and symplectic geometry have played a major role in the study of low-dimensional manifolds. The topological applications of gauge theory started with the work of Donaldson, who used the Yang-Mills equations to get constraints on the intersection forms of smooth $4$-manifolds \cite{Don}.  Counting solutions to the anti-self-dual Yang-Mills equations  yielded invariants that were able to distinguish between homeomorphic, but not diffeomorphic, $4$-manifolds \cite{DonPol}. Floer \cite{Floer} used the same equations to construct an invariant of $3$-manifolds, which became known as instanton Floer homology.

In the 1990's came the advent of the Seiberg-Witten (or monopole) equations \cite{SeWi1, SeWi2, Witten} in four dimensions. These can replace the Yang-Mills equations for most applications, and have better compactness properties. The corresponding monopole Floer homology for $3$-manifolds was fully developed by Kronheimer and Mrowka in \cite{KMBook}; see also \cite{MarcolliWang, Spectrum, Froyshov}.

Ozsv\'ath and Szab\'o \cite{HolDisk, HolDiskTwo, HolDiskFour} developed Heegaard Floer theory as a more computable alternative to Seiberg-Witten theory. Instead of gauge theory, they used pseudo-holomorphic curve counts in symplectic manifolds to define invariants of $3$-manifolds, knots, links, and $4$-manifolds. In particular, their mixed Heegaard Floer invariants of $4$-manifolds are conjecturally the same as the Seiberg-Witten invariants, and can be used for the same applications---for example, to detect exotic smooth structures. In dimension $3$, the equivalence between Heegaard Floer theory and Seiberg-Witten theory has recently been established, thanks to work of Kutluhan-Lee-Taubes \cite{KLT1, KLT2, KLT3, KLT4, KLT5} and Colin-Ghiggini-Honda \cite{CGH1, CGH2, CGH3, CGH4}.

The definitions of the Yang-Mills, Seiberg-Witten, and Heegaard Floer invariants have in common the use of solution counts to nonlinear elliptic PDE's. (In the Heegaard Floer case, these are the  nonlinear Cauchy-Riemann equations, which define pseudo-holomorphic curves.) Consequently, the invariants above are fundamentally different from more traditional topological invariants such as homology and homotopy groups, Reidemeister torsion, etc. The latter are known to be algorithmically computable---their definitions do not involve analysis. However, the traditional invariants are insufficient to detect the subtle information about $3$-manifolds and $4$-manifolds that comes from gauge theory or symplectic geometry.

The purpose of this article is to survey recent advances in the field that resulted in combinatorial descriptions for most of the Heegaard Floer invariants. These advances started with an idea of Sarkar, who observed that pseudo-holomorphic curves can be counted explicitly if one uses a certain kind of underlying Heegaard diagram. In the case of knots and links in $S^3$, \emph{grid diagrams} can be successfully used for this purpose. This allowed the Heegaard Floer invariants of knots and links in $S^3$ to be described combinatorially \cite{MOS, MOST}. Further, $3$-manifolds and $4$-manifolds can be represented in terms of links in $S^3$ via surgery diagrams and Kirby diagrams, respectively. One can then show that the Heegaard Floer invariants (modulo $2$) of $3$- and $4$-manifolds are algorithmically computable, by expressing these invariants in terms of those of the corresponding link \cite{IntSurg, LinkSurg, MOT}. 

There are several alternate combinatorial approaches to computing some of the Heegaard Floer invariants. These approaches include nice diagrams \cite{SarkarWang, LMW, OSSu2},  cubes of resolutions \cite{CubeRes, BaldwinLevine}, and bordered Floer homology \cite{LOThat}. In some cases, the resulting algorithms are more efficient than the ones based on grid diagrams. Nevertheless, grid diagrams are the most encompassing method, and they are the focus of our survey.

\medskip
\textbf{Acknowledgements.}  I owe an intellectual debt to my collaborators Robert Lipshitz, Peter Ozsv\'ath, Sucharit Sarkar, Zolt\'an Szab\'o, Dylan Thurston, and Jiajun Wang, who all played an important role in the development of the subject. I would also like to thank Tye Lidman for many helpful expository suggestions.

\section{Heegaard Floer homology and related invariants}

This section contains a quick overview of Heegaard Floer theory.

The theory started with the work of Ozsv\'ath and Szab\'o, who in \cite{HolDisk, HolDiskTwo}  introduced a set of $3$-manifold invariants in the form of modules over the polynomial ring $\zz[U].$ Roughly, their construction goes as follows. We represent a closed, oriented $3$-manifold $Y$ by a \emph{marked Heegaard diagram}, consisting of the following data:
\begin {itemize}
\item $\Sigma$, a closed oriented surface of genus $g$;
\item $\alphas = \{\alpha_1, \dots, \alpha_g\}$, a collection of disjoint, homologically linearly independent, simple closed curves on $\Sigma$.  By attaching $g$ disks to $\Sigma$ along the curves $\alpha_i$, and then attaching a $3$-ball, we obtain a handlebody $U_{\alpha}$ with boundary $\Sigma$;
\item $\betas = \{\beta_1, \dots, \beta_g\}$ a similar collection of curves on $\Sigma$, specifying a handlebody $U_{\beta}$; 
\item a basepoint $z \in \Sigma - \cup \alpha_i - \cup \beta_i$; 
\end {itemize}
such that $Y = U_{\alpha} \cup_{\Sigma} U_{\beta}.$ Any $3$-manifold can be represented in this way; see Figure~\ref{fig:HF} for a schematic picture.

\begin{figure}
\begin{center}
\input{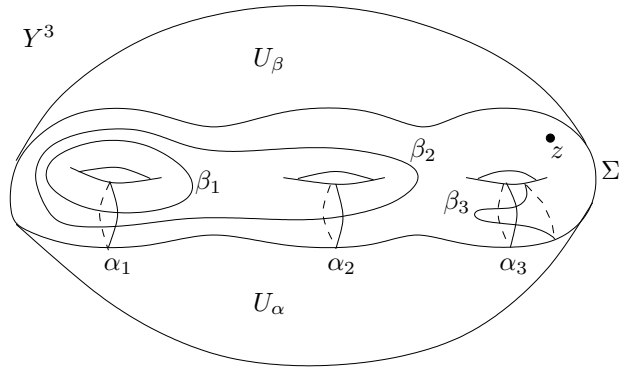}
\caption{A marked Heegaard diagram for $S^1 \times S^2$.}
\label{fig:HF}
\end{center}
\end{figure}

Next, we consider the tori
$$ \Ta = \alpha_1 \times \dots \times \alpha_g, \ \  \Tb = \beta_1 \times \dots \times \beta_g$$
inside the symmetric product $\Sym^g(\Sigma) = (\Sigma \times \dots \times \Sigma)/S_g$, which can be viewed as a symplectic manifold, cf. \cite{Perutz}. After small perturbations of the alpha or beta curves, we can assume that $\Ta$ and $\Tb$ intersect transversely.

To define the Heegaard Floer complex, we consider the intersection points
$$ \xs = \{x_1, \dots, x_g\} \in \Ta \cap \Tb$$ 
with $x_i \in \alpha_i \cap \beta_j \subset \Sigma$. Given two such intersection points $\xs, \ys$, a {\em Whitney disk} from $\xs$ to $\ys$ is defined to be a map $u$ from the unit disk $D^2$ to $\Sym^g(\Sigma)$, such that $u$ maps the lower half of the boundary $\del D^2$ to $\Ta$, the upper half to $\Tb$, and $u(-1)=\xs, u(1)=\ys$. The space of relative homotopy classes of Whitney disks from $\xs$ to $\ys$ is denoted $\pi_2(\xs, \ys)$. There is a natural map 
$$\Ta \cap \Tb \to \spc(Y), \ \ \ \xs \to \sss_z(\xs)$$ 
sending each intersection point to a $\spc$ structure on $Y$, such that $\pi_2(\xs, \ys)$ is empty when $\sss_z(\xs) \neq \sss_z(\ys).$

For every $\phi \in \pi_2(\xs, \ys)$, we can consider the moduli space of pseudo-holomorphic disks in the class $\phi$. These disks are solutions to the nonlinear Cauchy-Riemann equations, which depend on the choice of a family $J=\{J_t\}_{t \in [0,1]}$ of {\em almost complex structures} on the symmetric product. The moduli space has an expected dimension $\mu(\phi)$ (the Maslov index), and it comes with a natural action of $\rr$ by automorphisms of the domain. If $\mu(\phi)=1$, then for generic $J$, after dividing by the $\rr$ action the moduli space becomes just a finite set of points, which can be counted with certain signs. We let $c(\phi, J) \in \zz$ denote the resulting count. Moreover, we denote by $n_z(\phi)$ the intersection number between $\phi$ and the divisor $\{z\} \times \Sym^{g-1}(\Sigma)$. If $\phi$ admits any pseudo-holomorphic representatives, the principle of positivity of intersections for holomorphic maps implies that $n_z(\phi) \geq 0$.

Fix a $\spc$ structure $\sss$ on $Y$. Under a certain assumption on the Heegaard diagram (admissibility), one defines the Heegaard Floer complex $\CFminus(Y, \sss)$ to be the $\zz[U]$-module freely generated by intersection points $\xs \in \Ta \cap \Tb$ with $\sss_z(\xs)=\sss$. The differential on $\CFminus(Y, \sss)$ is given by the formula:
$$ \del \xs = \sum_{\{\ys \in \Ta \cap \Tb \mid \sss_z(\ys)=\sss\}} \sum_{\{\phi \in \pi_2(\xs, \ys) \mid \mu(\phi)=1 \}} c(\phi, J) \cdot U^{n_z(\phi)} \ys.$$ 

It can be shown that $\del^2 = 0$. Starting from the complex $\CFminus=\CFminus(Y, \sss)$ one defines the various versions of \emph {Heegaard Floer homology} to be:
$$ \HFminus = H_*(\CFminus), \ \  \HFplus = H_*( U^{-1} \CFminus / \CFminus ),$$
$$  \HFinfty = H_*(U^{-1}\CFminus),\ \ \HFhat = H_*(\CFminus/ (U=0)).$$

Although the complex $\CFminus$ depends on the choices of Heegaard diagram and almost complex structure, the homology groups do not:

\begin{theorem}[\cite{HolDisk}]
The Heegaard Floer homology modules $\HFminus(Y, \sss)$, $\HFplus(Y, \sss)$, $\HFinfty(Y, \sss)$, $\HFhat(Y, \sss)$ are invariants of the $3$-manifold $Y$ equipped with the $\spc$ structure $\sss$. 
\end {theorem}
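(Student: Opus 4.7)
The plan is to exploit the fact that any two marked Heegaard diagrams representing the same pair $(Y,\sss)$ can be connected by a finite sequence of elementary moves: (i) change of the path of almost complex structures $J$; (ii) isotopies of the $\alpha$ and $\beta$ curves (supported away from the basepoint $z$); (iii) handleslides among the $\alpha$ curves, and similarly among the $\beta$ curves; and (iv) index-one stabilization/destabilization of $\Sigma$. This reduction is the classical Reidemeister--Singer theorem, adapted to the pointed setting; keeping track of the basepoint up to isotopy is exactly what ensures one stays within a fixed $\spc$ structure. So I would first establish this combinatorial reduction (citing the pointed version), and then verify that the chain homotopy type of $\CFminus(Y,\sss)$ is preserved under each of the four moves. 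Since $\HFminus$, $\HFplus$, $\HFinfty$, and $\HFhat$ are all functors of the chain homotopy type of $\CFminus$ (as a $\zz[U]$-module), invariance at the chain level yields invariance of each homology module.

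For moves (i) and (ii), the standard tool is a continuation map. Given two generic choices $J_0,J_1$ joined by a one-parameter family $J_t$, I would count pseudo-holomorphic disks with respect to this family in Maslov index $0$ classes to define a chain map $\Phi\colon \CFminus_{J_0}\to \CFminus_{J_1}$; a two-parameter family produces the chain homotopy showing $\Phi$ is independent of path, and concatenation arguments show it is a homotopy equivalence with inverse $\Phi'$ induced by the reverse family. Isotopies of $\alphas$ or $\betas$ are handled identically: a Hamiltonian isotopy of $\Ta$ (or $\Tb$) inside $\Sym^g(\Sigma)$ induces a continuation map that is a chain homotopy equivalence, provided the isotopy is supported in the complement of $\{z\}\times\Sym^{g-1}(\Sigma)$ so that $n_z$ is preserved.

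The core analytical difficulty is move (iii), handleslide invariance. Here the standard approach is to introduce the triple diagram $(\Sigma,\alphas,\alphas',\betas)$, where $\alphas'$ is obtained from $\alphas$ by a handleslide plus small isotopy so that each $\alpha'_i$ meets $\alpha_i$ in two transverse points. One then counts pseudo-holomorphic triangles in $\Sym^g(\Sigma)$ with boundary on $\Ta,\Ta',\Tb$ to define a chain map
$$F_{\alpha\alpha'\beta}(\xs) \;=\; \sum_{\ys}\sum_{\psi,\,\mu(\psi)=0} c(\psi,J)\cdot U^{n_z(\psi)}\,\ys,$$
where the inner sum runs over triangle classes $\psi$ connecting $\xs\otimes\Theta_{\alpha\alpha'}$ to $\ys$, and $\Theta_{\alpha\alpha'}$ is a distinguished top-graded cycle in $\widehat{\HF}(\Sigma,\alphas,\alphas',z)$. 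A genus-one model computation, together with a K\"unneth-type argument for the surgery on a handlebody, identifies $\widehat{\HF}$ of $(\Sigma,\alphas,\alphas',z)$ with $\Wedge^* H^1(T^g)\otimes\zz[U]/(U)$ and exhibits the cycle $\Theta_{\alpha\alpha'}$. The hard part is the associativity of triangle counts applied to a quadruple diagram $(\Sigma,\alphas,\alphas',\alphas'',\betas)$: the boundary of the moduli space of rectangles gives a chain homotopy between the composition $F_{\alpha'\alpha''\beta}\circ F_{\alpha\alpha'\beta}$ and $F_{\alpha\alpha''\beta}$ precomposed with multiplication by $\Theta_{\alpha\alpha'}\cdot\Theta_{\alpha'\alpha''}$. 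Choosing $\alphas''$ to be a small perturbation of $\alphas$ makes $F_{\alpha\alpha''\beta}$ the nearly-identity map and $\Theta_{\alpha\alpha'}\cdot\Theta_{\alpha'\alpha''}=\Theta_{\alpha\alpha}$, so $F_{\alpha\alpha'\beta}$ is a homotopy equivalence. Controlling the compactifications of these triangle and rectangle moduli spaces (and verifying the asymptotic gluing theorem) is where the bulk of the analytical work lies.

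Finally, for move (iv), stabilization adjoins a torus summand with a pair of curves $\alpha_{g+1},\beta_{g+1}$ meeting in a single point $p$. A direct inspection shows every generator of the stabilized complex has the form $\xs\times\{p\}$, and any holomorphic disk with $n_z=0$ decomposes as a product of a disk in the original diagram with the constant disk at $p$; this yields a tautological chain isomorphism $\CFminus_{\text{stab}}(Y,\sss)\cong\CFminus(Y,\sss)$, possibly after stretching the almost complex structure near the connect-sum neck to make the Gromov-type compactness argument rigorous. Combining all four invariance statements completes the proof.
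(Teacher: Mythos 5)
The paper itself offers no proof of this theorem---it is quoted from \cite{HolDisk}---and your outline reproduces the standard argument given there (pointed Reidemeister--Singer moves, continuation maps for changes of $J$ and for isotopies, triangle maps with the associativity/rectangle argument for handleslides, and a neck-stretching/tautological identification for stabilization), so your approach is essentially the same as the source's. The only substantive omission is the admissibility hypothesis on the Heegaard diagrams, which is needed to make the differentials and the comparison maps finite sums for the $-$, $+$, and $\infty$ flavors, together with the verification that all the elementary moves can be carried out within the class of admissible diagrams.
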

 
If $\circ$ is any of the four flavors of Heegaard Floer homology ($-$, $+$, $\infty$, or $\wedge$), we will denote by $\HF^\circ(Y)$ the direct sum of $\HF^\circ(Y, \sss)$ over all $\spc$ structures $\sss$.
 
Heegaard Floer homology has found numerous applications to $3$-dimensional topology. Among them we mention: detection of the Thurston norm \cite{GenusBounds}, detection of whether the $3$-manifold fibers over the circle \cite{Ni3F}, and a characterization of which Seifert fibrations admit tight contact structures \cite{LiscaStipsicz}.

The reader may wonder about the differences between the different variants of Heegaard Floer homology. 
The full power of the theory comes from the versions $\HFminus$ and $\HFplus$, which contain (roughly) equivalent information. The version $\widehat{HF}$ is weaker: it  suffices for many $3$-dimensional applications, but it does not give any non-trivial information about closed $4$-manifolds. (As explained below, the mixed invariants of $4$-manifolds are constructed by combining the plus and minus theories.) The version $\HFinfty$ is the least useful, being determined by classical topological information, at least when we work modulo $2$ and we restrict to torsion $\spc$ structures; see \cite{Lidman}. 

As proved by Ozsv\'ath and Szab\'o in \cite{HolDiskFour}, the four variants of Heegaard Floer homology are functorial under $\spc$-decorated cobordisms. Precisely, given a $4$-dimensional manifold $W$ with $\del W = (-Y_0) \cup Y_1$, and a $\spc$ structure $\sss$ on $W$ with restrictions $\sss_0$ to $Y_0$ and $\sss_1$ to $Y_1$, there are induced maps:
$$ F^\circ_{W, \sss} : HF^\circ(Y_0, \sss_0) \longrightarrow HF^\circ(Y_1,  \sss_1),$$
where $\circ$ can stand for  any of the four flavors of Heegaard Floer homology. The maps $F^\circ_{W, \sss}$ are defined by counting \emph{pseudo-holomorphic triangles} in the symmetric product, with boundaries on three different tori.

Suppose we have a smooth, closed, oriented $4$-manifold $X$ with $b_2^+(X) > 1$. An {\em admissible cut} for $X$ is a smoothly embedded $3$-manifold $N \subset X$ which divides $X$ into two pieces $X_1$ and $X_2$ with $b_2^+(X_i) > 0$ for $i=1,2,$ and such that $\delta H^1(N; \zz) = 0 \subset H^2(X; \zz)$. 

An admissible cut can be found for any $X$. Given an admissible cut, one can delete a four-ball from the interior of each piece $X_i$ to obtain two cobordisms $W_1$ (from $S^3$ to $N$) and $W_2$ (from $N$ to $S^3$). Let $\sss$ be a $\spc$ structure on $X$. We consider the following diagram:
$$
\xymatrixcolsep{5pc}
\xymatrix{
 \HFminus(S^3)  \ar[r]^-{F^-_{W_1, \sss|_{W_1}}} \ar@{-->}[dr] & \HFminus(N, \sss|_N)& \\
& \HFred(N, \sss|_N) \ar@{^{(}->}[u] \ar@{-->}[dr] & \\
&  \HFplus(N, \sss|_N) \ar@{->>}[u] \ar[r]_-{F^+_{W_2, \sss|_{W_2}}}  & \HFplus(S^3),
 }
$$
where $\HFred$ is the image of a natural map $\HFplus \to \HFminus$. (This map exists for any $3$-manifold.) The conditions in the definition of an admissible cut ensure that the maps $F^-_{W_1, \sss|_{W_1}}$ and $F^+_{W_2, \sss|_{W_2}}$ factor through $\HFred$. The composition of the two lifts (indicated by dashed arrrows) is called a mixed map. The image of $1 \in \HFminus(S^3) \cong \zz$ under this map defines the \emph{Ozsv\'ath-Szab\'o mixed invariant} of the pair $(X, \sss)$. This is conjecturally equivalent to the well-known Seiberg-Witten invariant \cite{Witten}, and is known to share many of its properties. In particular, it can be used to distinguish homeomorphic $4$-manifolds that are not diffeomorphic. 

\section{Link Floer homology}

Ozsv\'ath-Szab\'o \cite{OSknots} and, independently, Rasmussen \cite{RasmussenThesis} used Heegaard Floer theory to define invariants for knots in $3$-manifolds: these are the various versions of \emph{knot Floer homology}.

Recall that a marked Heegaard diagram $(\Sigma, \alphas, \betas, z)$ represents a $3$-manifold $Y$. If one specifies another basepoint $w$ in the complement of the alpha and beta curves, this gives rise to a knot $K \subset Y$. Indeed, one can join $w$ to $z$ by a path in $\Sigma \setminus \cup \alpha_i$, and then push this path into the interior of the handlebody $U_{\alpha}$. Similarly, one can join $w$ to $z$ in the complement of the beta curves, and push the path into $U_{\beta}$. The union of these two paths is the knot $K$. For simplicity, we will assume that $K$ is null-homologous. (Of course, this happens automatically if $Y=S^3$.)

In the definition of the Heegaard Floer complex $\CFminus$ we kept track of intersections with $z$ through the exponent $n_z(\phi)$ of the variable $U$. Now that we have two basepoints, we have two quantities $n_z(\phi)$ and $n_w(\phi)$. One thing we can do is to count only disks in classes with $n_z(\phi)=0$, and keep track of $n_w(\phi)$ in the exponent of $U$. The result is a complex of $\zz[U]$-modules denoted $\CFKminus(Y, K)$, with homology $\HFKminus(Y, K)$. If we set the variable $U$ to zero, we get a complex $\CFKhat(Y, K)$, with homology $\HFKhat(Y, K)$. These are two of the variants of knot Floer homology. There exist many other variants, some of which involve classes $\phi$ with $n_z(\phi) \neq 0$ and $n_w(\phi) \neq 0$; an example of this, denoted $\Chain^{\pm}(K)$, will be mentioned in section~\ref{sec:surgery}.

Let us focus on the case when $Y=S^3$. The groups $\HFKhat(S^3, K)$ naturally split as direct sums:
$$ \HFKhat(S^3, K) = \bigoplus_{m, s \in \zz} \HFKhat_m(S^3, K, s).$$
Here, $m$ and $s$ are certain quantities called the Maslov and Alexander gradings, respectively. We can encode some of the information in $\HFKhat$ into a polynomial 
$$ \Poly_K(t, q) = \sum_{m, s \in \zz} t^m q^s \cdot \rk \ \HFKhat_m(S^3, K, s).$$

The specialization $\Poly_K(-1,q)$ is the classical Alexander polynomial of $K$. However, the applications of knot Floer homology go well beyond those of the Alexander polynomial. In particular, the genus of the knot, which is defined as
$$ g(K) = \min \{g \mid \exists\ \text{ embedded, oriented surface} \Sigma \subset S^3, \  \del \Sigma = K \},$$
can be read from $\HFKhat$:
\begin{theorem}[\cite{GenusBounds}]
\label {thm:genusK}
For any knot $K \subset S^3$, we have
$$ g(K) = \max \{s \geq 0 \mid \exists \ m, \ \widehat{HFK}_m(S^3, K, s) \neq 0 \}.$$
\end {theorem}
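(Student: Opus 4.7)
The strategy is to prove the two inequalities separately; write $\tau(K)$ for the right-hand side of the claim. For the upper bound $\tau(K) \leq g(K)$, I would start from a minimal-genus Seifert surface $F$ for $K$ and construct a doubly-pointed Heegaard diagram $(\Sigma, \alphas, \betas, w, z)$ for $(S^3, K)$ adapted to $F$, in the sense that $F$ caps off in the zero-surgery $Y_0 = S^3_0(K)$ to a closed surface $\widehat{F}$ of the same genus $g(K)$, and the Heegaard surface $\Sigma$ meets $\widehat{F}$ in a single simple closed curve. On such a diagram, the Alexander grading of any generator $\xs$ is given by the Chern-class formula
\[ 2A(\xs) = \langle c_1(\sss_{w,z}(\xs)), [\widehat{F}]\rangle, \]
and a direct combinatorial count on the diagram bounds the right-hand side by $2g(\widehat{F}) = 2g(K)$, yielding $\tau(K) \leq g(K)$.

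The reverse inequality $\tau(K) \geq g(K)$ is the hard direction. The plan is to reduce the non-vanishing of $\HFKhat(S^3, K, g(K))$ to a non-vanishing statement for $\HFplus$ of $Y_0$, and then to produce the latter using contact geometry. First, an adjunction-inequality argument using the Chern-class formula above shows that $\HFplus(Y_0, \sss) = 0$ for any $\spc$ structure $\sss$ on $Y_0$ with $|\langle c_1(\sss), [\widehat{F}]\rangle| > 2g(K) - 2$. Second, the large-surgery theorem (proved by comparing Heegaard diagrams for the knot and for the zero-surgery) matches $\HFKhat(S^3, K, g(K))$ with $\HFplus(Y_0, \sss_0)$ for a distinguished extremal $\spc$ structure $\sss_0$ on $Y_0$ satisfying $\langle c_1(\sss_0), [\widehat{F}]\rangle = 2g(K) - 2$. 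So it suffices to show $\HFplus(Y_0, \sss_0) \neq 0$.

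To produce this non-vanishing, invoke Gabai's theorem to find a taut foliation on $Y_0$ that has $\widehat{F}$ as a compact leaf. Perturb this foliation by the Eliashberg--Thurston theorem to a weakly symplectically fillable contact structure $\xi$ on $Y_0$; the homotopy class of the perturbed plane field ensures that the $\spc$ structure of $\xi$ is precisely $\sss_0$. Finally, appeal to the Ozsv\'ath--Szab\'o non-vanishing theorem, which says that the contact invariant of a weakly fillable contact structure is non-zero in $\HFplus$ of the orientation-reversed manifold; combining this with the orientation-reversal duality for Heegaard Floer homology gives $\HFplus(Y_0, \sss_0) \neq 0$, completing the argument.

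The main obstacle is the lower bound. The upper bound reduces to a tractable combinatorial computation on a preferred Heegaard diagram, whereas the lower bound stitches together several heavy external inputs: Gabai's construction of taut foliations from minimal-genus Seifert surfaces, the Eliashberg--Thurston perturbation, and the Ozsv\'ath--Szab\'o non-vanishing theorem for contact invariants of fillable contact structures. The most delicate piece of bookkeeping is tracking $\spc$ structures through these reductions, so that the non-zero contact class is detected in exactly the extremal $\spc$ structure matched with $\HFKhat(S^3, K, g(K))$ under the large-surgery correspondence.
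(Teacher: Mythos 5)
This survey states Theorem~\ref{thm:genusK} only as a citation to \cite{GenusBounds} and gives no proof, so there is nothing internal to compare against; your plan is a faithful reconstruction of the argument in that reference (the adjunction-type upper bound from a Heegaard diagram adapted to a minimal Seifert surface, then Gabai's taut foliation, the Eliashberg--Thurston perturbation, and the non-vanishing of the contact invariant, transported back to $\HFKhat$ via the surgery formula identifying the top Alexander grading with $\HFplus$ of the zero-surgery in the extremal $\spc$ structure). The only points to flag are that the Ozsv\'ath--Szab\'o non-vanishing theorem for weakly (semi-)fillable contact structures is proved with \emph{twisted} coefficients, so a short universal-coefficients step is needed to deduce untwisted non-vanishing of $\HFplus(Y_0,\sss_0)$, and that the Gabai/Eliashberg--Thurston input requires $g(K)\geq 1$, the unknot case being checked directly.
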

Since the only knot of genus zero is the unknot, we have:
\begin{corollary}
$K$ is the unknot if and only if $\Poly_K(q,t)=1$.
\end{corollary}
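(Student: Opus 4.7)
The plan is to deduce the corollary directly from Theorem~\ref{thm:genusK} together with a small direct computation of $\HFKhat$ for the unknot, using the classical fact that the unknot is the unique knot of genus zero in $S^3$.

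For the reverse implication, suppose $\Poly_K(q,t)=1$. By the definition of $\Poly_K$, this forces $\rk\, \HFKhat_m(S^3,K,s)=0$ for every $(m,s)\neq(0,0)$ and equals $1$ when $(m,s)=(0,0)$. In particular there is no $s\geq 1$ with $\HFKhat_m(S^3,K,s)\neq 0$, while $\HFKhat_0(S^3,K,0)\neq 0$ contributes $s=0$ to the set in Theorem~\ref{thm:genusK}. Hence the maximum appearing there equals $0$, so $g(K)=0$. A knot of genus zero bounds an embedded disk in $S^3$, so $K$ is the unknot.

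For the forward implication, one must check directly that if $K$ is the unknot then $\Poly_K(q,t)=1$. The strategy is to exhibit a genus-one marked Heegaard diagram for $(S^3,K)$ with exactly one intersection point: place the two basepoints $w$ and $z$ on opposite sides of a single alpha curve (a meridian of the torus) and a single beta curve (a longitude), intersecting transversely in one point $x$. Then $\Ta\cap\Tb=\{x\}$, so the complex $\CFKhat$ has a single generator and vanishing differential, giving $\HFKhat(S^3,K)\cong \zz$, concentrated in bigrading $(0,0)$ after normalizing the Maslov and Alexander gradings. Therefore $\Poly_K(q,t)=1$.

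The only mildly subtle step is the forward direction's grading normalization: one has to justify that the unique generator $x$ indeed has $m=s=0$. This follows from the standard conventions that fix the absolute Maslov grading so that $\HFhat(S^3)\cong\zz$ lies in grading zero, together with the symmetry of $\HFKhat$ under $(m,s)\mapsto(m-2s,-s)$ for null-homologous knots, which forces a single generator in bigrading $(m_0,s_0)$ with $s_0=0$ and $m_0=0$. Apart from this bookkeeping, both directions are immediate consequences of Theorem~\ref{thm:genusK}.
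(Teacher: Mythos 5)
Your proof is correct and follows the same route as the paper, which deduces the corollary from Theorem~\ref{thm:genusK} together with the fact that the unknot is the only genus-zero knot; the paper simply leaves implicit both the forward direction (the standard computation $\HFKhat(S^3,\text{unknot})\cong\zz$ in bigrading $(0,0)$ from a genus-one doubly-pointed diagram) and the grading bookkeeping, which you spell out. The only pedantic caveat is that $\rk\,\HFKhat_m=0$ does not literally force the group to vanish, but this is the same level of precision at which the paper itself operates.
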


By a result of Ghiggini \cite{Ghiggini}, the polynomial $\Poly$ has enough information to also detect the right-handed trefoil, the left-handed trefoil, and the figure-eight knot. Ni \cite{Ni} extended the work in \cite{Ghiggini} to show that $S^3 \setminus K$ fibers over the circle if and only if $ \oplus_m \HFKhat_m(S^3, K, g(K)) \cong \zz$. Other applications of knot Floer homology include the construction of a concordance invariant called $\tau$ \cite{OStau}, and a complete characterization of which lens spaces can be obtained by surgery on knots \cite{Greene}.

If instead of a knot $K \subset S^3$ we have a link $L$ (a disjoint union of knots), we can define invariants $\HFLhat(S^3, L), \HFLminus(S^3, L)$, which are versions of  {\em link Floer homology}. When $L$ is a knot, $\HFLhat$ and $\HFLminus$ reduce to $\HFKhat$ and $\HFKminus$, respectively. In general, the definition of link Floer homology involves choosing a new kind of Heegaard diagram for $S^3$, in which the number of alpha (or beta) curves exceeds the genus of the Heegaard surface. The details can be found in \cite{Links}. If 
the diagram has $g+k-1$ alpha curves, it should also have $g+k-1$ beta curves, $k$ basepoints of type $z$, and $k$ basepoints of type $w$. In the simplest version, $k$ is the same as the number $\ell$ of components of the link, and joining the $w$ basepoints to the $z$ basepoints in pairs (by a total of $2\ell$ paths) produces the link $L$. Instead of $\zz[U]$, the link Floer complexes are defined over a polynomial ring $\zz/2[U_1, \dots, U_{\ell}]$, with one variable for each component. (In fact, we expect the complexes to be defined  over $\zz[U_1, \dots, U_{\ell}]$. However, at the moment some orientation issues are not yet settled, and the theory is only defined with mod $2$ coefficients.) 

More generally, we could have $k \geq \ell$, and break the link into more segments. We can then define a link Floer complex over $\zz/2[U_1, \dots, U_k]$, with one variable for each basepoint; see \cite{MOS, LinkSurg}. The homology of this complex is still $\HFLminus$, and all the variables corresponding to basepoints on the same link component act the same way. If we set one $U_i$ variable from each link component to zero in the complex, the resulting homology is $\HFLhat$. If we set all the $U_i$ variables to zero in the complex, the homology becomes 
\begin {equation}
\label {eq:vees}
\HFLhat(S^3, L) \otimes V^{k - \ell},
\end {equation}
where $V$ is a two-dimensional vector space over $\zz/2$.

\section{Grid diagrams and combinatorial link Floer complexes}

\begin {definition}
Let $L \subset S^3$ be a link. A {\em grid diagram} for $L$ consists of an $n$-by-$n$ grid in the plane with $O$ and $X$ markings inside, such that:
\begin {enumerate}
\item Each row and each column contains exactly one $X$ and one $O$;
\item As we trace the vertical and horizontal segments between $O$'s and $X$'s (with the vertical segments  passing over the horizontal segments), we see a planar diagram for the link $L$. 
\end {enumerate}
\end{definition}

An example is shown on the left hand side of Figure~\ref{fig:grid1}. It is not hard to see that every link admits a grid diagram. In fact, as a way of representing links, grid diagrams are equivalent to arc presentations, which originated in the work of Brunn \cite{Brunn}. The minimal number $n$ such that $L$ admits a grid diagram of size $n$ is called the {\em arc index} of $L$. 

\begin{figure}
\begin{center}
\includegraphics{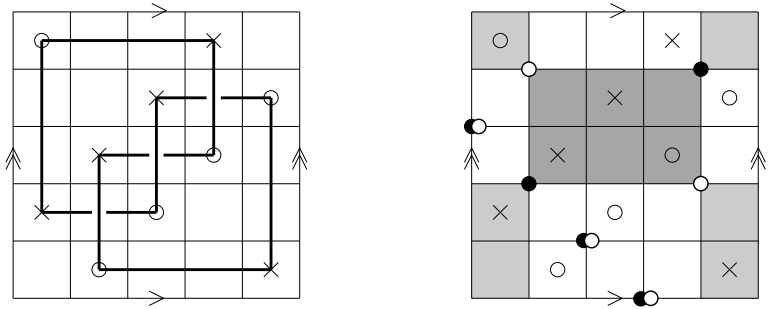}
\caption{A grid diagram for the trefoil, and two empty rectangles in $\EmptyRect(\xs, \ys)$. Here, $\xs$ is indicated by the collection of black dots, and $\ys$ by the collection of white dots. One empty rectangle is darkly shaded. The other rectangle is wrapped around the torus, and consists of the union of the four lightly shaded areas.}
\label{fig:grid1}
\end{center}
\end{figure}

Grid diagrams can be viewed as particular examples of Heegaard diagrams with multiple basepoints, of the kind discussed at the end of the previous section. Indeed, if we identify the opposite sides of a grid diagram $G$ to get a torus, we can let this torus be the Heegaard surface $\Sigma$, the horizontal circles be the $\alpha$ curves, the vertical circles be the $\beta$ curves, the $O$ markings be the $w$ basepoints, and the $X$ markings be the $z$ basepoints. A point $\xs = \{x_1, \dots, x_n\}$ in the intersection $\Ta \cap \Tb$ consists an $n$-tuple of points on the grid (one on each vertical and horizontal circle). There are $n!$ such intersection points, and they are precisely the generators of the link Floer complex. We denote the set of these generators by $\Gens(G)$.

\begin {definition}
Let $G$ be a grid diagram, and $\xs, \ys \in \Gens(G)$. We define a {\em rectangle} from $\xs$ to $\ys$ to be a rectangle on the grid torus with the lower left and upper right corner being points of $\xs$, the lower right and upper right corners being points of $\ys$, and such that all the other components of $\xs$ and $\ys$ coincide. (In particular, for such a rectangle to exist we need $\xs$ to differ from $\ys$ in exactly two rows.) A rectangle is called {\em empty} if it contains no components of $\xs$ or $\ys$ in its interior. The set of empty rectangles from $\xs$ to $\ys$ is denoted $\EmptyRect(\xs, \ys)$.
\end {definition}

Of course, the space $\EmptyRect(\xs, \ys)$ has at most two elements. An example where it has exactly two is shown on the right hand side of Figure~\ref{fig:grid1}. 

The reason why grid diagrams are useful in Heegaard Floer theory is that they make pseudo-holomorphic disks of Maslov index $1$ easy to count:

\begin{proposition}[\cite{MOS}]
\label{prop:MOS}
Let $G$ be a grid diagram, and let $\xs, \ys \in \Gens(G)$. Then, there is a $1$-to-$1$ correspondence:
$$ \Bigl \{ \phi \in \pi_2(\xs, \ys) \mid \mu(\phi)=1, \ c(\phi, J) \equiv 1(\operatorname{mod} \ 2) \text{ for generic } J \Bigr \}\longleftrightarrow  \EmptyRect(\xs, \ys).$$
\end{proposition}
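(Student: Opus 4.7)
\medskip
\noindent\textbf{Proof proposal.}
The strategy is to translate the question into a combinatorial problem about \emph{domains} on the grid torus via Lipshitz's Maslov index formula, and then carry out an explicit holomorphic count for the surviving domains. First I would replace any $\phi \in \pi_2(\xs,\ys)$ by its domain $\Dom(\phi)$, the linear combination of the $n^2$ unit squares of $G$ whose coefficients are the local intersection multiplicities $n_p(\phi)$. Any $\phi$ admitting a pseudo-holomorphic representative must satisfy $\Dom(\phi) \geq 0$ in every region, by positivity of intersections. I would then invoke Lipshitz's formula
\[ \mu(\phi) \;=\; e(\Dom(\phi)) + n_\xs(\Dom(\phi)) + n_\ys(\Dom(\phi)), \]
where $e$ is the Euler measure (additive over $2$-chains, and equal to $0$ on every unit square of $G$) and $n_\xs, n_\ys$ are the averages of the four adjacent multiplicities at each component of $\xs$ and $\ys$. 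Since $e(\Dom(\phi))=0$ whenever $\Dom(\phi)\geq 0$, the condition $\mu(\phi)=1$ reduces to $n_\xs(\Dom(\phi))+n_\ys(\Dom(\phi))=1$.

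Each component of $\xs\cup\ys$ contributes a non-negative multiple of $\tfrac14$ to $n_\xs+n_\ys$, namely $\tfrac14$ times the sum of the four adjacent multiplicities. Consequently the total $1$ forces exactly four components of $\xs\cup\ys$ to sit at a corner where three adjacent squares have multiplicity $0$ and one has multiplicity $1$, while every other component is surrounded entirely by $0$'s. Combined with the boundary equations (multiplicities jump by $\pm 1$ across the two $\alpha$- and two $\beta$-curves on which $\xs$ and $\ys$ differ), a short case analysis identifies $\Dom(\phi)$ as a single embedded rectangle $r$ on the torus whose lower-left and upper-right corners lie in $\xs$ and whose other two corners lie in $\ys$. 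Moreover, any further component of $\xs$ in the interior of $r$ would force all four adjacent squares to have multiplicity $\geq 1$, contributing an extra $1$ to $n_\xs+n_\ys$ and pushing $\mu \geq 2$; hence $r$ must be \emph{empty}. This establishes the claimed bijection at the level of $\mu=1$ homotopy classes supporting any holomorphic representative.

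It remains to show that for each empty rectangle the count $c(\phi,J)$ is $1$ mod $2$ for generic $J$. I would take $J=\Sym^g(j)$ for a complex structure $j$ on $\Sigma$, perturbed slightly near the diagonal of $\Sym^g(\Sigma)$ to secure transversality. For an empty rectangle $r$, a holomorphic representative is then produced explicitly via the Riemann mapping theorem: the $g-2$ coordinates of the lift to $\Sigma^g$ corresponding to shared components of $\xs\cap\ys$ are taken to be constant, while the remaining two coordinates parametrize $r$ as the image of a Riemann map $D^2 \to r$ with the prescribed boundary arcs on the two $\alpha$-sides and two $\beta$-sides. Such a map is unique up to the one-parameter family of M\"obius automorphisms of $D^2$ fixing $\pm 1$; after quotienting by the induced $\rr$-action the moduli space is a single point, giving $c(\phi,J)\equiv 1 \pmod 2$.

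The main obstacle is handling the diagonal locus in $\Sym^g(\Sigma)$ rigorously. One must verify that the lifted disks avoid the diagonal (so that the local product picture applies), and that the count computed with the nearly-split structure $\Sym^g(j)$ is preserved under perturbation to a fully generic $J$. The diagonal analysis together with this transversality/invariance check constitute the main analytic content; once in hand, they combine with the combinatorial classification of positive $\mu=1$ domains above to yield the stated bijection.
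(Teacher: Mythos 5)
Your proposal follows essentially the same route as the paper's sketch: pass to domains, apply Lipshitz's index formula with the observation that unit squares have zero Euler measure, force the total vertex multiplicity to be $1$ to pin down an empty rectangle, and then use the Riemann mapping theorem for the converse count. The only quibble is notational: for a grid of size $n$ the ambient space is $\Sym^n$ of the torus with $n$ alpha curves, so the constant coordinates number $n-2$ rather than $g-2$.
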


\begin{proof}[Sketch of proof]
In any Heegaard diagram, if we have a relative homotopy class $\phi \in \pi_2(\xs, \ys)$, we can associate to it a two-chain $\Dom(\phi)$ on the Heegaard surface $\Sigma$, as follows. Let $n$ be the number of alpha (or beta) curves. Together, the alpha and the beta curves split $\Sigma$ into several connected regions $R_1, \dots, R_m$. For each $i$, let us pick a point $p_i$ in the interior of $R_i$, and define the {\em multiplicity} of $\phi$ at $R_i$ to be the intersection number $n_{p_i}(\phi)$ between $\phi$ and $\{p_i\} \times \Sym^{n-1}(\phi)$. We set
$$ \Dom(\phi) = \sum_i n_{p_i}(\phi) R_i.$$

This is called the {\em domain} of $\phi$. If $\phi$ admits any pseudo-holomorphic representatives, then the multiplicities $n_{p_i}(\phi)$ must be nonnegative.

Lipshitz \cite{LipshitzCyl} showed that the Maslov index of $\phi$ can be expressed in terms of the domain:
$$ \mu(\phi) = e(\Dom(\phi)) + N(\Dom(\phi)),$$
where $e$ and $N$ are certain quantities called the {\em Euler measure} and {\em total vertex multiplicity}, respectively. The Euler measure is additive on regions, that is, we can define $e(R_i)$ such that $e(\Dom(\phi)) = \sum_i n_{p_i}(\phi) e(R_i).$ If we take the sum $\sum_i e(R_i)$ we get the Euler characteristic of the Heegaard surface $\Sigma$. As for the total vertex multiplicity, it is the sum of $2n$ {\em vertex multiplicities} $N_q(\Dom(\phi))$, one for each point $q \in \xs$ or $q \in \ys$. The quantity $N_q(\Dom(\phi))$ is the average of the multiplicities of $\phi$ in the four quadrants around $q$.

In the case of a grid $G$, the regions $R_i$ are the $n^2$ unit squares of $G$. Each square has Euler measure zero. If we have $\phi \in \pi_2(\xs, \ys)$ with $\mu(\phi)=1$ and $c(\phi, J) \neq 0$, then the coefficients of $R_i$ in $\Dom(\phi)$ are nonnegative. This implies that $1=\mu(\phi)=N(\Dom(\phi))$ is 
a sum of vertex multiplicities $N_q(\Dom(\phi))$. Each $N_q(\Dom(\phi))$ is either zero or at least $1/4$. A short analysis shows that $\Dom(\phi)$ must be an empty rectangle.

Conversely, given an empty rectangle, there is a corresponding class $\phi$ with $\mu(\phi)=1$. An application of the Riemann mapping theorem shows that $\phi$ has an odd number of pseudo-holomorphic representatives for generic $J$.
\end {proof}

In view of Proposition~\ref{prop:MOS}, the link Floer complex associated to a grid can be defined in a purely combinatorial way. Precisely, we define $C^-(G)$ to be freely generated by $\Gens(G)$ over the ring $\zz/2[U_1, \dots, U_n],$ with differential:
$$ \del \xs = \sum_{\ys \in \Gens(G)} \sum_{\{r \in \EmptyRect(\xs, \ys) \mid \Xs_i(r) = 0,\ \forall i\}} U_1^{\Os_1(r)} \dots U_n^{\Os_n(r)} \cdot \ys.$$

Here, $\Os_i(r)$ encodes whether or not the $i\th$ marking of type $O$ is in the interior of the rectangle $r$: if it is, we set $\Os_i(r)$ to be $1$; otherwise it is $0$. The quantity $\Xs_i(r) \in \{0,1\}$ is defined similarly, in terms of the $i\th$ marking of type $X$. 

The homology of $C^-(G)$ is the link Floer homology $\HFLminus(S^3, L)$.

\begin {remark}
Although the complex $C^-(G)$ is defined with mod $2$ coefficients, one can add signs in the differential to get a complex over $\zz[U_1, \dots, U_n]$, whose homology is still a link invariant. See \cite{MOST, Gallais}.
\end{remark}

If in $C^-(G)$ we set one variable $U_i$ from each link component to zero, we get a complex $\widehat{C}(G)$ with homology $\HFLhat(S^3, L)$. Perhaps the simplest complex is 
$$\widetilde{C}(G) = C^-(G)/(U_1=U_2 = \dots = U_n =0),$$
for which we only need to count the empty rectangles with no markings of any type in their interior. The homology of $\widetilde{C}(G)$ is $\HFLhat(S^3, L) \otimes V^{n-\ell}$; compare \eqref{eq:vees}.

In particular, when $L=K$ is a knot, the homology of $\widetilde{C}(G)$ is $\HFKhat(S^3, K) \otimes V^{n-1}$.  
There exist simple combinatorial formulas for the Maslov and Alexander gradings of the generators in $\Gens(G)$, and from them one gets a bi-grading on $H_*(\widetilde{C}(G))$; see \cite{MOS}, \cite{MOST}. From here one can recover $\HFKhat(S^3, K)$ as a bi-graded group, taking into account that each $V$ factor is spanned by one generator in bi-degree $(0,0)$ and another in bi-degree $(-1,-1)$. This method of calculating $\HFKhat$ was implemented on the computer by Baldwin and Gillam \cite{BaldwinGillam}; see also Droz \cite{ Droz} for a more efficient program, using a variation of this method due to Beliakova \cite{Beliakova}. 

In view of Theorem~\ref{thm:genusK}, we see that grid diagrams yield an algorithm for detecting the genus of a knot. In particular, if we are given a knot diagram and want to see if it represents the unknot, we can turn it into a grid diagram (after some suitable isotopies), then set up the complex $\widetilde{C}(G)$, and check if $H_*(\widetilde{C}(G)) \cong V^{n-1}$. 

Among the other applications of the grid diagram method we mention one to contact geometry: the construction of new invariants for Legendrian and transverse knots in $S^3$ \cite{OST}. This led to numerous examples of transverse knots with the same self-linking number that are not transversely isotopic \cite{NOT}.

Another application of knot Floer homology via grid diagrams is Sarkar's combinatorial proof of the Milnor conjecture \cite{SarkarTau}. The Milnor conjecture states that the slice genus of the torus knot $T(p,q)$ is $(p-1)(q-1)/2$; a corollary is that the minimum number of crossing changes needed to turn $T(p,q)$ into the unknot is also $(p-1)(q-1)/2$. The conjecture was first proved by Kronheimer and Mrowka using gauge theory \cite{KMSlice}; for other proofs, see \cite{OStau}, \cite{RasmussenSlice}.

Slight variations of grid diagrams can be used to compute the knot Floer homology of knots inside lens spaces \cite{Hedden}, and of a knot inside its cyclic branched covers \cite{LevineCovers}.

Finally, we mention that there exists a purely combinatorial proof that $H_*(C^-(G))$ and $H_*(\widehat{C}(G))$ are link invariants \cite{MOST}.

\section{Three-manifolds and four-manifolds}
\label{sec:surgery}

For a general Heegaard diagram, counting pseudo-holomorphic disks in the symmetric product is very difficult. Why is it easy for a grid diagram? If we look at the proof of Proposition~\ref{prop:MOS}, a key point we find is that the regions $R_i$ have zero Euler measure. In fact, what is important is that they have {\em nonnegative} Euler measure: since the total vertex multiplicity is always nonnegative, the fact that $e(\Dom(\phi)) + N(\Dom(\phi)) = 1$ imposes tight constraints on the possibilities for $\Dom(\phi)$. 

In general, if a Heegaard surface $\Sigma$ can be partitioned into regions of nonnegative Euler measure, its Euler characteristic (which is the sum of all the Euler measures) must be nonnegative; that is, $\Sigma$ must be a sphere or a torus. Our grid diagrams were set on a torus. There is also a variant on the sphere, that produces another combinatorial link Floer complex, and in the end yields the same homology.

Instead of a knot in $S^3$, we could take a $3$-manifold $Y$ and try to compute its Heegaard Floer homology using this method. The problem is that a typical $3$-manifold does not admit a Heegaard diagram of genus $0$ or $1$; only $S^3$, $S^1 \times S^2$ and lens spaces do. However, Sarkar and Wang \cite{SarkarWang} proved that one can find a Heegaard diagram for $Y$, called a {\em nice diagram}, in which all regions except one have nonnegative Euler measure. (This is related to the fact that on a surface of higher genus we can move all negative curvature to a neighborhood of a point.) If we put the basepoint $z$ in the bad region (the one with negative Euler measure), then we can understand pseudo-holomorphic curve counts for all classes $\phi$ with $n_z(\phi)=0$. These are the only classes that appear when defining the complex $\CFhat(Y)$. Thus, we get an algorithm for computing $\widehat{HF}$ of any $3$-manifold. We refer to \cite{SarkarWang} for more details, and to \cite{OSSu2, OSSconvenient} for related work.

Similarly, one can compute the cobordism maps $\hat{F}_{W, \s}$ for any simply connected $W$ \cite{LMW}. These suffice to detect exotic smooth structures on some $4$-manifolds with boundary, but not on any closed $4$-manifolds.

This line of thought runs into major difficulties if one wants to understand combinatorially the plus and minus versions of $HF$, or the mixed invariants of $4$-manifolds. Instead, what is helpful is to reduce everything to the case of links in $S^3$, and then appeal to grid diagrams. This program was developed in \cite{LinkSurg, MOT}, and is summarized below.

Let us recall a theorem of Lickorish and Wallace \cite{Lickorish, Wallace}, which says that any closed $3$-manifold $Y$ is integral surgery on a link in $S^3$:
$$ Y = (S^3 \setminus \nu(L)) \cup_{\phi} (\nu(L)).$$
Here, $\nu(L)$ is a tubular neighborhood, and $\phi$ is a self-diffeomorphism of $\del \nu(L)$. The diffeomorphism can be specified in terms of a framing of the link, which in turn is determined by choosing one integer for each link component. For example, the Poincar\'e sphere is surgery on the right-handed trefoil with $+1$ framing. In general, we denote by $S^3_{\Lambda}(L)$ the result of surgery on $L$ with framing $\Lambda$.

Four-manifolds can also be expressed in terms of links, using Kirby diagrams \cite{Kirby}. By Morse theory, a closed $4$-manifold can be broken into a $0$-handle, some $1$-handles (represented in a Kirby diagram by circles marked with a dot), some $2$-handles (represented by framed knots), some $3$-handles, and a $4$-handle. The positions of the $1$-handles and $2$-handles determine the manifold. See Figure~\ref{fig:kirby} for a few examples.

\begin{figure}
\begin{center}
\input{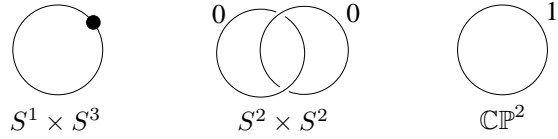}
\caption{Kirby diagrams for a few simple $4$-manifolds.}
\label{fig:kirby}
\end{center}
\end{figure}

The next step in the program is to express the Heegaard Floer homology of surgery on a  link in terms of data associated to the link. The first result in this direction was obtained by Ozsv\'ath and Szab\'o \cite{IntSurg}, who dealt with surgery on knots:

\begin {theorem}[\cite{IntSurg}]
\label{thm:IntSurg}
There is an (infinitely generated) version of the knot Floer complex, $\Chain^+(K)$, such that 
$$ \HFplus(S^3_n(K)) = H_*\bigl (\Cone\bigl( \Chain^+(K) \xrightarrow{\Phi^{K}_n} \Chain^+(\emptyset)\bigr) \bigr)$$
where in $\Chain^+(K), \Chain^+(\emptyset)$ we count pseudo-holomorphic bigons and in $\Phi^K_n$ we count pseudo-holomorphic triangles.

The complex $\Chain^+(\emptyset)$ is a direct sum of infinitely many copies of $\CFplus(S^3)$.  The inclusion of one of these copies into the mapping cone complex
$$\Cone\bigl(\Chain^+(K) \xrightarrow{\Phi^{K}_n} \Chain^+(\emptyset) \bigr)$$
induces on homology the map
$ F^-_{W,\sss} : \HFplus(S^3) \longrightarrow \HFplus(S^3_n(K))$
corresponding to the surgery cobordism $(2$-handle attachment along $K)$, equipped with a $\spc$ structure $\sss$.
\end {theorem}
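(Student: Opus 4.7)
The plan is to reduce the general surgery formula to the \emph{large surgery theorem} via an associativity argument for pseudo-holomorphic triangles.

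I would begin with a doubly-pointed Heegaard diagram $(\Sigma, \alphas, \betas, w, z)$ for $(S^3, K)$, from which the pair of basepoints induces a $\zz \oplus \zz$ bi-filtration on $\CFinf(S^3)$, with generators labelled $[\xs, i, j]$ for $\xs \in \Ta \cap \Tb$. For each $s \in \zz$, let $A_s^+$ be the subquotient of $\CFinf(S^3)$ consisting of those $[\xs, i, j]$ with $i \geq 0$ or $j \geq s$, and let $B_s^+ \cong \CFplus(S^3)$ be the subquotient $\{i \geq 0\}$. Set
\[
\Chain^+(K) := \bigoplus_{s \in \zz} A_s^+, \qquad \Chain^+(\emptyset) := \bigoplus_{s \in \zz} B_s^+.
\]
Both are $\zz[U]$-modules whose internal differentials count holomorphic bigons in $\Sym^g(\Sigma)$ with appropriate constraints on $n_w(\phi)$ and $n_z(\phi)$.

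Next, I define $\Phi^K_n$ to have two families of components: a \emph{vertical} map $v_s^+ : A_s^+ \to B_s^+$, given by the projection quotienting out the subcomplex $\{j < s\}$, and a \emph{horizontal} map $h_s^+ : A_s^+ \to B_{s+n}^+$, given by the analogous projection quotienting out $\{i<0\}$ composed with a reindexing shift by the framing $n$. Geometrically, both maps arise as different $\spc$-summands of the holomorphic triangle map associated to the Heegaard triple $(\Sigma, \alphas, \betas, \gammas)$, where $\gammas$ is obtained from $\betas$ by handlesliding the $\beta$-curve meeting the meridian of $K$ across a longitude of framing $n$.

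The heart of the proof is the identification $H_*(\Cone(\Phi^K_n)) \cong \HFplus(S^3_n(K))$, which I would establish in two stages. The \emph{local} input is the large surgery theorem: a winding argument on the Heegaard diagram shows that for $|N| \gg g(K)$ and any $\spc$ structure $\ssst_s$ on $S^3_N(K)$ indexed by $s$, the complex $\CFplus(S^3_N(K), \ssst_s)$ is quasi-isomorphic to $A_s^+$, and the $2$-handle cobordism map from the surgery cobordism matches $v_s^+$ under this identification. The \emph{global} step then uses the associativity of holomorphic quadrilateral counts applied to a quadruple diagram obtained by inserting a fourth set of attaching curves that realizes the passage from $N$-framed to $n$-framed surgery; associativity produces a null-homotopy between iterated triangle maps, precisely assembling the $v_s^+$ and $h_s^+$ components into a mapping cone quasi-isomorphic to $\CFplus(S^3_n(K))$.

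The main obstacle is bookkeeping of $\spc$ structures and the absolute $\qq$-grading compatibly across $A_s^+$, $B_s^+$, and $B_{s+n}^+$. In particular, verifying that the inclusion of a single summand $B_s^+ \hookrightarrow \Chain^+(\emptyset) \to \Cone(\Phi^K_n)$ realizes the $2$-handle cobordism map $F^+_{W, \sss}$ for the unique $\spc$ structure $\sss$ on $W$ with $\langle c_1(\sss), [\widehat{F}] \rangle + n = 2s$ (where $\widehat{F}$ is the capped-off Seifert surface) requires a careful comparison of first Chern class formulas on the Heegaard triple and quadruple. Once this indexing is settled, the remaining work is an essentially combinatorial verification using the surgery exact triangle.
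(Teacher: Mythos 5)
The survey states Theorem~\ref{thm:IntSurg} without proof --- it is quoted from \cite{IntSurg}, with only the remark that the argument rests on the surgery exact triangle --- so your proposal can only be measured against that source, and it does follow the same route: $A_s^+ = C\{\max(i,j-s)\ge 0\}$ and $B_s^+ = C\{i\ge 0\}$ as subquotients of the bifiltered knot complex, the large surgery theorem as the local input, and quadrilateral counts on a Heegaard quadruple diagram as the global glue. Two concrete problems remain. First, you have the two components of $\Phi^K_n$ swapped: quotienting $A_s^+$ by $\{j<s\}$ lands in $C\{j\ge s\}$, not in your $B_s^+ = C\{i\ge 0\}$. The correct assignment is that $v_s^+\colon A_s^+\to B_s^+$ kills $\{i<0\}$, while $h_s^+\colon A_s^+\to B_{s+n}^+$ kills $\{j<s\}$ and then identifies $C\{j\ge s\}$ with $\CFplus(S^3)$ via the $U^s$-shift together with the flip exchanging the roles of $w$ and $z$ (not a mere ``reindexing''). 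This is not cosmetic: $v_s^+$ is an isomorphism for $s\gg 0$ and $h_s^+$ for $s\ll 0$, and interchanging them effectively reverses the sign of the framing in the mapping cone.

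Second, your global step hides the hardest part of the argument. Associativity of triangle and quadrilateral counts gives a chain-level null-homotopy of the composite of two consecutive maps in the surgery triangle, but that alone does not ``assemble'' anything: one still needs (i) the homological-algebra detection lemma identifying the third term of an exact triangle with the mapping cone of one of its maps, which requires checking that an induced map on cones is a quasi-isomorphism, and (ii) a truncation argument, since $\Cone(\Phi^K_n)$ is infinitely generated while the exact triangle only sees finitely many $\spc$ structures at a time. The truncation uses exactly the asymmetry noted above: for $|s|$ large either $v_s^+$ or $h_s^+$ is an isomorphism, so the part of the cone supported in $|s|>b$ is acyclic and the computation reduces to a finite complex (with the case $n=0$ requiring separate care). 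Your closing sentence, calling the remainder ``an essentially combinatorial verification,'' is precisely where this content lives and should be spelled out. The $\spc$-indexing formula $\langle c_1(\sss),[\widehat{F}]\rangle + n = 2s$ for identifying which summand $B_s^+$ realizes the $2$-handle cobordism map is correct and is indeed the right bookkeeping for the last assertion of the theorem.
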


The proof of Theorem~\ref{thm:IntSurg} is based on an important property of $\HFplus$ called the surgery exact triangle. The version $\HFminus$ does not have a similar exact triangle, but a slight variant of it, $\HFm$ does. The version $\HFm$ is obtained from $\HFminus$ by completion with respect to the $U$ variable. For torsion $\spc$ structures $\sss$, one can recover $\HFminus(Y, \sss)$ from $\HFm(Y, \sss)$, so in that case the two versions contain equivalent information. 

There is an analogue of Theorem~\ref{thm:IntSurg} with $\HFm$ instead of $\HFplus$, and with a knot Floer complex denoted $\Chain^-$ instead of $\Chain^+$. 

There is also an extension of Theorem~\ref{thm:IntSurg} to surgeries on links rather than single knots. Phrased in terms of $\HFm$, it reads:

\begin {theorem}[\cite{LinkSurg}]
\label{thm:LinkSurg}
If $L = K_1 \cup K_2 \subset S^3$ is a link with framing $\Lambda$, then $\HFm(S^3_{\Lambda}(L))$ is isomorphic to the homology of a  complex $\C^-(L, \Lambda)$ of the form
\begin{equation}
\label{eq:sc}
 \xymatrix{
 \Chain^-(L)\ar[d] \ar[r] \ar[dr] & \Chain^-(K_1) \ar[d] \\
 \Chain^-(K_2) \ar[r]  & \Chain^-(\emptyset)
 } 
 \end{equation}
where the edge maps count holomorphic triangles, and the diagonal map counts holomorphic quadrilaterals.

This can be generalized to links with any number of components. The higher diagonals involve counting higher holomorphic polygons. Further, the inclusion of the subcomplex corresponding to $L' \subseteq L$  corresponds to the cobordism maps given by surgery on $L - L'$. 
\end {theorem}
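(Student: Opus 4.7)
The plan is to generalize the two-term mapping cone of Theorem~\ref{thm:IntSurg} by iterating the surgery exact triangle once for each link component, with the higher-codimension faces of the resulting hypercube recording the failure of the successive triangle maps to strictly commute. For a two-component link this produces exactly the square \eqref{eq:sc}; for an $\ell$-component link one obtains an $\ell$-dimensional hypercube of chain complexes, whose total complex is $\C^-(L,\Lambda)$.

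The first step is to build a \emph{complete system of hyperboxes of Heegaard diagrams} adapted to $(L,\Lambda)$. Concretely, for each sublink $M \subseteq L$ one fixes a multi-pointed Heegaard diagram $\mathcal{H}_M$ in which the beta curves corresponding to components of $M$ have been replaced by meridians of the surgered components, while the remaining beta curves still encode $S^3$. The associated link Floer complex recovers the complex $\Chain^-(L \setminus M)$ of the remaining (unsurgered) link, viewed inside the partially surgered manifold. Going from $M$ to $M \cup \{K_i\}$ is realized by a handleslide of the $i\th$ beta curve, so there are natural triangle-counting maps $\Chain^-(L \setminus M) \to \Chain^-(L \setminus (M \cup \{K_i\}))$; these become the edge maps of the hypercube.

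The second step is to fill in the higher-dimensional faces. For each chain $M \subset M' \subseteq L$ with $|M' \setminus M| = k$, one introduces a map obtained by counting pseudo-holomorphic $(k+2)$-gons whose boundaries lie on the various tori associated to the intermediate sublinks. The degeneration analysis of one-parameter families of such polygons, together with the $A_\infty$-style relations for polygon counts in the symmetric product, must then yield the identity $\partial \circ \partial = 0$ on the total complex, and more generally the structure equations of a hyperbox of chain complexes. For $\ell = 2$ this is precisely the statement that the diagonal quadrilateral map provides the chain homotopy between the two composites of triangle maps in \eqref{eq:sc}.

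The third step is to identify the homology of this hypercube with $\HFm(S^3_\Lambda(L))$. The idea is to filter $\C^-(L,\Lambda)$ by the sublink $M$, which yields an iterated mapping cone decomposition; collapsing one coordinate at a time reduces to the single-component case, where one invokes the surgery exact triangle (in the $\HFm$ form) and the knot surgery formula of Theorem~\ref{thm:IntSurg}. The naturality statement about the inclusion of the subcomplex indexed by $L' \subseteq L$ follows by recognizing each such partial collapse as the image of the two-handle cobordism map, which in turn is represented by triangle counts on $\mathcal{H}_M$.

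The hardest step will be step two: setting up the $(k+2)$-gon moduli spaces over a hyperbox of coherent almost-complex structures and analytic perturbations, and proving that all boundary degenerations either cancel in pairs or contribute to the prescribed lower-order terms. This requires a systematic theory of hyperboxes of chain complexes and, on the analytic side, transversality and gluing for higher polygons with cylindrical ends at the link basepoints, keeping careful track of how disks crossing the basepoints on surgered versus unsurgered components contribute powers of the formal variables in $\Chain^-$. Once this machinery is in place, the identification with $\HFm(S^3_\Lambda(L))$ is a formal consequence of the (already known) surgery exact triangle.
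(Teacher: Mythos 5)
This survey does not actually prove Theorem~\ref{thm:LinkSurg}; it is quoted from \cite{LinkSurg}, and your outline does match the architecture of the proof given there: a hypercube of generalized link Floer complexes indexed by sublinks, edge maps counting triangles, higher faces counting higher polygons, and structure equations coming from degenerations of polygon moduli spaces. Two of your simplifications, however, hide real content. First, the vertices of the hypercube are all Floer complexes of sublinks of $L$ \emph{in $S^3$} (with extra free basepoints), not complexes ``viewed inside the partially surgered manifold''; the surgery data enters only through the maps. Moreover the edge maps are not single handleslide triangle maps but \emph{compressions} of hyperboxes built from a \emph{complete system} of Heegaard diagrams: one must interpolate between the diagrams for different sublinks through chains of moves (including basepoint manipulations), and each edge map is a sum over the two orientations of the component being surgered, reflecting the two nontrivial terms of the exact triangle. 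Making the hyperbox structure equations hold for these composite maps is a substantial combinatorial and algebraic layer on top of the analytic one you flag as hardest.

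Second, and more seriously, your step three asserts that identifying $H_*(\C^-(L,\Lambda))$ with $\HFm(S^3_\Lambda(L))$ is ``a formal consequence of the (already known) surgery exact triangle.'' It is not: even in the knot case of Theorem~\ref{thm:IntSurg}, the exact triangle only relates three surgeries at a time, and the quasi-isomorphism with the mapping cone is established via the \emph{large surgery formula} (identifying $\HFm$ of a sufficiently large surgery with the homology of a single $\Chain^-$) together with a \emph{truncation} argument replacing the infinitely generated complex by a finite quasi-isomorphic piece. For links the truncation must be carried out compatibly in all $\ell$ directions and the induction on the number of components run through it; the completion to $\HFm$ is precisely what makes this limiting argument work, which is why the theorem is stated for $\HFm$ rather than $\HFminus$. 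Without this step your proposal only establishes that $\C^-(L,\Lambda)$ is a well-defined complex, not that its homology computes the Floer homology of the surgered manifold.
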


\begin{remark}
For technical reasons, at the moment Theorem~\ref{thm:LinkSurg} is only established with mod $2$ coefficients. 
\end{remark}

If $W$ is a cobordism between (connected) $3$-manifolds that consists of $2$-handles only, then we can express one boundary piece of $W$ as surgery on a link $L' \subset S^3$, and $W$ as a handle attachment along a link $L-L'$. Thus, Theorem~\ref{thm:LinkSurg} gives a description of the maps on $\HFm$ associated to any such cobordism $W$. In fact, $2$-handles are the main source of complexity in $4$-manifolds. Once we understand them, it is not hard to incorporate the maps induced by $1$-handles and $3$-handles into the picture. The result is a description of the Ozsv\'ath-Szab\'o mixed invariant of a $4$-manifold $X$ in terms of link Floer complexes. For this one needs to represent $X$ by a slight variant of a Kirby diagram, called a {\em cut link presentation}; we refer to \cite{LinkSurg} for more details.

\begin {theorem}[\cite{MOT}]
\label{thm:MOT}
Given any $3$-manifold $Y$ with a $\spc$ structure $\sss$, the Heegaard Floer homologies $\HFplus(Y, \sss)$ and $\HFm(Y, \sss)$ (with mod $2$ coefficients) are algorithmically computable. So are the mixed invariants $\Psi_{X, \sss}$ (mod $2$) for closed $4$-manifolds $X$ with $b_2^+(X) > 1$ and $\sss \in \spc(X)$. 
\end {theorem}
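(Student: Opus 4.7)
The plan is to reduce everything to combinatorial counts on grid diagrams via the link surgery formula. First, I would represent the given 3-manifold $Y$ as integer surgery $S^3_{\Lambda}(L)$ on a framed link in $S^3$ via Lickorish--Wallace. Applying the multi-component version of Theorem~\ref{thm:LinkSurg}, $\HFm(Y, \sss)$ is identified with the homology of a surgery hypercube $\C^-(L, \Lambda)$ whose vertices are link Floer complexes $\Chain^-(L')$ for sublinks $L' \subseteq L$, whose edges count holomorphic triangles, and whose higher faces count holomorphic polygons. The $\spc$-decomposition is controlled by explicit affine-lattice data coming from $\Lambda$, so it is enough to algorithmically compute each vertex complex and each polygon map with mod $2$ coefficients.

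Each vertex $\Chain^-(L')$ is already within reach of the grid calculus of the previous section: an appropriate ``infinitely generated'' extension of Proposition~\ref{prop:MOS} yields a combinatorial model over $\zz/2[[U_1, \dots, U_n]]$ with Alexander and Maslov gradings read directly off the grid, so the vertices are computable. The serious obstacle is to make the polygon maps combinatorial. On a plain grid, triangles and higher polygons have positive Euler measure, so the rigidity argument behind Proposition~\ref{prop:MOS} does not apply and one cannot simply enumerate ``empty polygons.''

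To overcome this, I would follow the strategy of \cite{MOT} and enrich the grid by inserting several parallel, slightly translated copies of the beta curves, producing a Heegaard multi-diagram in which every region still has zero Euler measure. Lipshitz's formula $\mu(\phi) = e(\Dom(\phi)) + N(\Dom(\phi))$ then forces the index-zero polygons to be combinatorial objects of a restricted shape (generalized rectangles, pentagons, hexagons), whose mod $2$ holomorphic counts follow from a Riemann mapping argument. One then has to verify, using naturality of the link surgery formula and a hyperbox compression as in \cite{LinkSurg}, that these combinatorial polygon counts assemble into a hyperbox of chain complexes quasi-isomorphic to $\C^-(L, \Lambda)$. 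This identification is the main technical obstacle and occupies the bulk of \cite{MOT}.

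Once $\HFm(Y, \sss)$ is algorithmic, $\HFplus(Y, \sss)$ follows either from the standard long exact sequence relating the two variants or from the parallel surgery description in terms of $\Chain^+$. For the mixed invariants, I would present the closed 4-manifold $X$ by a cut link presentation, that is, a Kirby diagram broken along an admissible cut; the 1-handle and 3-handle contributions enter through explicit combinatorial tensor factors, while the 2-handle cobordism maps on $\HFm$ and $\HFplus$ between the two pieces of the cut are computed by applying Theorem~\ref{thm:LinkSurg} to each side. Composing through $\HFred$ as in the definition of the mixed map, and tracking the image of $1 \in \HFm(S^3)$, then yields $\Psi_{X, \sss}$ mod $2$.
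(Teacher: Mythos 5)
Your overall architecture (Lickorish--Wallace plus the link surgery formula, grid diagrams for the vertex complexes, cut link presentations for the mixed invariants) matches the paper's. The gap is in how you handle the higher polygon maps. You claim that after adding parallel translated copies of the beta curves, the index-zero $m$-gons are forced into restricted combinatorial shapes ``whose mod $2$ holomorphic counts follow from a Riemann mapping argument.'' For $m=2$ and $m=3$ this is correct (bigons give empty rectangles, triangles give the snail domains of Figure~\ref{fig:ULeft}), because a bigon or a triangle has no conformal moduli and the Riemann mapping theorem pins down the count. But for $m \geq 4$ the conformal structure on the $m$-gon varies in a positive-dimensional family, and the count of pseudo-holomorphic representatives of a given domain genuinely depends on the choice of the family $J$ of almost complex structures; it is \emph{not} determined by the domain, and no Riemann mapping argument applies. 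This is precisely the obstruction the paper identifies, and your proposal does not get past it.

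The paper's actual route is different at this point: rather than computing the polygon counts, one axiomatizes them. A \emph{formal complex structure} on the grid is any assignment of mod $2$ counts to domains satisfying the constraints imposed by positivity of intersections and Gromov compactness; each such structure $\cx$ yields a surgery complex $\C^-(G,\Lambda,\cx)$, homotopic formal complex structures yield quasi-isomorphic complexes, and an honest $J$ yields a formal complex structure whose complex is \eqref{eq:sc}. The key theorem of \cite{MOT} is then that on the \emph{sparse double} $G_\#$ of the grid (obtained by interleaving extra rows, columns, and $O$-markings --- not by adding parallel beta curves) any two formal complex structures are homotopic. The algorithm is therefore to write down an arbitrary formal complex structure on $G_\#$ combinatorially and compute the homology of the resulting complex. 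Without this uniqueness-up-to-homotopy step, or some substitute for it, your argument cannot certify that whatever combinatorial counts you choose compute the correct invariant.
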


\begin{proof}[Sketch of proof]
We can represent the $3$-manifold or the $4$-manifold in terms of a link, as above (by a surgery diagram or a cut link presentation). The idea is then to take a grid diagram $G$ for the link, and apply Theorem~\ref{thm:LinkSurg}. We know that index $1$ holomorphic disks (bigons) on the symmetric product of the grid correspond to empty rectangles. However, to apply Theorem~\ref{thm:LinkSurg} we also need to be able to count higher pseudo-holomorphic polygons. In \cite{MOT}, it is shown that isolated pseudo-holomorphic triangles on the symmetric product are in $1$-to-$1$ correspondence with domains on  the grid of certain shapes, as shown in Figure~\ref{fig:ULeft}.

\begin{figure}
\begin{center}
\includegraphics{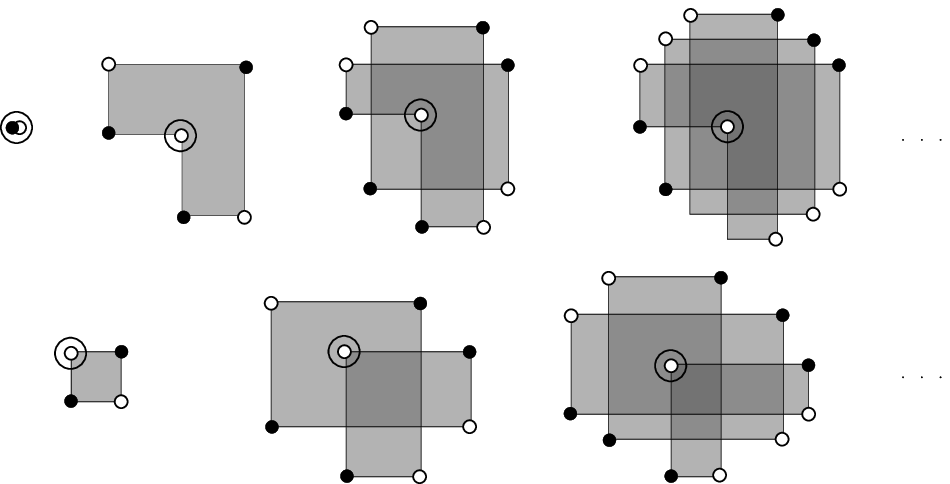}
\caption{Snail domains. Darker shading corresponds to
higher local multiplicities.  The domains in each row (top or bottom) are part of an infinite sequence, corresponding to increasing complexities. The larger circles represent certain fixed points on the grid, called  destabilization points. Each domain corresponds to a pseudo-holomorphic triangle in the symmetric product of the grid. }
\label{fig:ULeft}
\end{center}
\end{figure}

No such easy description is available for counts of pseudo-holomorphic $m$-gons on $\Sym^n(G)$ with $m \geq 4$. The trouble is that, unlike for $m=2$ or $3$, the counts for $m \geq 4$ depend on the choice of a generic family $J$ of almost complex structures on $\Sym^n(G)$. Still, the counts are required to satisfy certain constraints, coming from positivity of intersections and Gromov compactness. We define a {\em formal complex structure} on $G$ to be any count of domains on the grid that satisfies these constraints. 

A formal complex structure is a purely combinatorial object. Each such structure $\cx$ gives rise to a complex $\C^-(G, \Lambda, \cx)$, similar to \eqref{eq:sc}, but where instead of pseudo-holomorphic polygon counts we use the domain counts prescribed by $\cx$. In particular, a family of almost complex structures $J$ on the symmetric product produces a formal complex structure, whose corresponding complex is exactly \eqref{eq:sc}.  There is a definition of homotopy between formal complex structures, and if two such structures are homotopic, they give rise to quasi-isomorphic complexes $\C^-(G, \Lambda, \cx).$ 

We conjecture that any two formal complex structures on a grid diagram are homotopic. A weaker form of this conjecture, sufficient for our purposes, is proved in \cite{MOT}. Instead of an ordinary grid diagram $G$, we use its {\em sparse double} $G_\#$. This is obtained from $G$ by introducing $n$ additional rows, columns, and $O$ markings, interspersed between the previous rows and columns, as shown in Figure~\ref{fig:sparse}. The sparse double is not a grid diagram in the usual sense, because the new rows and columns have no $X$ markings. Nevertheless, it can still be viewed as a type of Heegaard diagram for the link, and pseudo-holomorphic bigons and triangles correspond to empty rectangles and snail domains, just as before. One result of \cite{MOT} is that on the sparse double, any two (sparse) formal complex structures are homotopic. 

\begin{figure}
\begin{center}
\input{sparse.pstex_t}
\caption{The sparse double of the grid diagram from Figure~\ref{fig:grid1}.}
\label{fig:sparse}
\end{center}
\end{figure}

With this in mind, the desired algorithm for computing $\HFm$ is as follows: Choose any formal complex structure on $G_\#$, and then calculate the homology of $\C^-(G, \Lambda, \cx).$ This homology is independent of $\cx$, so it agrees with the homology of the complex \eqref{eq:sc}. By Theorem~\ref{thm:LinkSurg}, this gives exactly $\HFm$ of surgery on the framed link. Similar algorithms can be constructed for computing $\HFplus$ and $\Phi_{X, \sss}$. 
\end {proof}

\section {Open problems}
\label {sec:open}

\subsection{Develop more efficient algorithms} 
A weakness of the grid diagram approach is that the size of the combinatorial knot Floer complex increases super-exponentially (like $n!$) with respect to the size of the grid. Nevertheless, in practice, computer programs \cite{BaldwinGillam, Droz} can calculate knot Floer homology (for knots and links in $S^3$) from diagrams of grid number up to $13$. The algorithms become much less effective for $3$-manifolds, and especially for $4$-manifolds:  this is because, for example, representing the $K3$ surface requires a grid of size at least $88$. 

A related open problem is to decide whether the unknotting problem can be solved in polynomial time.

\subsection{Combinatorial proofs.} To completely set the theory in elementary terms, it remains to give purely combinatorial proofs that the Heegaard Floer invariants are indeed invariants of the underlying object. For link Floer homology, this was achieved in \cite{MOST}. For $\HFhat$ of $3$-manifolds (defined from a class of diagrams called convenient, rather than from surgery formulas), a combinatorial proof of invariance appeared in \cite{OSSconvenient}. The cases of the other versions of $HF$, and of the mixed invariants of $4$-manifolds, remain open.

Also missing are combinatorial proofs for most of the topological properties of Heegaard Floer theory.
For example, it is not known how to prove combinatorially that knot Floer homology detects the genus of a knot.

\subsection{Loose ends} In terms of showing algorithmic computability, there are a few aspects of the theory that are not taken care of by Theorem~\ref{thm:MOT}:

\begin{itemize}
\item {\em Signs}. Extend the combinatorial descriptions to the invariants defined over $\zz$ (rather than over $\ff=\zz/2\zz$).

\item {\em Cobordism maps.} One can understand combinatorially the maps on Heegaard Floer homology induced by $2$-handle cobordisms, and the mixed map for closed $4$-manifolds, but not yet the maps induced by a general cobordism between $3$-manifolds (which may include $1$- and $3$-handles). 

\item {\em The uncompleted $\HFminus$ and $HF^{\infty}$.} Theorem~\ref{thm:MOT} is about $\HFm$ rather than $\HFminus$. For torsion $\spc$ structures, knowledge of $\HFm$ determines $\HFminus$. For nontorsion $\spc$ structures, understanding $\HFminus$ is basically equivalent to understanding $\HFm$ and $HF^\infty$; the latter group has not yet been computed.
\end{itemize}

\subsection{Other open problems in Heegaard Floer theory} As mentioned in the introduction, in dimension $3$, the Heegaard-Floer and Seiberg-Witten Floer homologies are known to be isomorphic. In dimension $4$, it is still open to prove that the mixed Ozsv\'ath-Szab\'o invariant of $4$-manifolds is the same as the Seiberg-Witten invariant.

Another important question is to understand the relationship of Heegaard Floer theory to Yang-Mills theory, and to the fundamental group of a $3$-manifold.

\bibliographystyle{amsalpha} 
\bibliography{biblio}

\end{document}